\theoremstyle{plain}
\newtheorem{thm}{Theorem}[section]
\newtheorem{prop}[thm]{Proposition}
\newtheorem{cor}[thm]{Corollary}
\theoremstyle{definition}
\newcommand{\Z}{{\mathbb Z}}
\newcommand{\N}{{\mathbb N}}
\DeclareMathOperator{\sh}{{Shfl}}
\newcommand{\gd}{{\delta}}
\newcommand{\gs}{{\sigma}}
\newcommand{\gz}{{\zeta}}
\renewcommand{\N}{{\mathbb  N}}
\newcommand{\ZN}{{\mathbb  Z_{\ge 0}}}
\newcommand{\Hq}{{H_q}}
\newcommand{\sHq}{{H_q^*}}
\newcommand{\tHq}{{\tilde{H}_q}}
\newcommand{\ors}{{\mathbf{s}}}
\newcommand{\ort}{{\mathbf{t}}}
\begin{document}

\title{On $q$-Analog of Wolstenholme Type Congruences for Multiple Harmonic Sums}
\author{Jianqiang Zhao}
\date{}
\maketitle

\begin{center}
 {Department of Mathematics, Eckerd College, St. Petersburg, FL 33711}
\end{center}

\textbf{Abstract.} Multiple harmonic sums are iterated generalizations of harmonic sums. Recently Dilcher has considered congruences involving $q$-analogs of these sums in depth one. In this paper we shall study the homogeneous case for arbitrary depth by using generating functions and shuffle relations of the $q$-analog of multiple harmonic sums. At the end, we also consider some non-homogeneous cases.

\textbf{Keywords.} Multiple harmonic sums, $q$-multiple harmonic sums, shuffle relations.

\section{Introduction.}

In \cite{SP} Shi and Pan extended Andrews' result \cite{An} on the
$q$-analog of Wolstenholme Theorem to the following two cases: for all prime $p\ge 5$
\begin{alignat}{2}\label{equ:SP1}
  \sum_{j=1}^{p-1} \frac{1}{[j]_q} \equiv & \frac{p-1}2 (1-q) +\frac{p^2-1}{24} (1-q)^2 [p]_q &\pmod{[p]_q^2}, \\
 \sum_{j=1}^{p-1} \frac{1}{[j]_q^2} \equiv & - \frac{(p-1)(p-5)}{12} (1-q)^2    &\pmod{[p]_q},  \label{equ:SP2}
\end{alignat}
where $[n]_q=(1-q^n)/(1-q)$ for any $n\in \N$ and $q\ne 1$. This type of congruences is considered
in the polynomial ring $\Z[q]$ throughout this paper. Notice that the modulus $[p]_q$ is an irreducible polynomial in $q$ when $p$ is a prime. In \cite{D} Dilcher
generalized the above two congruences further
to sums of the form  $\sum_{j=1}^{p-1} \frac{1}{[j]_q^n}$
and $\sum_{j=1}^{p-1} \frac{q^n}{[j]_q^n}$ for
all positive integers $n$ in terms of certain determinants of binomial coefficients.
However, his modulus is always $[p]_q$.
He also expressed these congruences using Bernoulli numbers, Bernoulli numbers of the second kind,
and Stirling numbers of the first kind, which we briefly recall now.

The well-known Bernoulli numbers are defined by the following generating series:
$$\frac{x}{e^x-1}=\sum_{n=0}^\infty B_n \frac{x^n}{n!}=1-\frac{1}{2}\frac{x}{1!}+\frac{1}{6}\frac{x^2}{2!}-\frac{1}{30}\frac{x^4}{4!}+\cdots.$$
On the other hand, the Bernoulli numbers of the second kind
are defined by the power series (cf. \cite[p.\ 114]{roman}).
$$\frac{x}{\log(1+x)}=\sum_{n=0}^\infty b_n \frac{x^n}{n!}=1+\frac{1}{2}\frac{x}{1!}-\frac{1}{6}\frac{x^2}{2!}+\frac{1}{4}\frac{x^3}{3!}-\frac{19}{24}\frac{x^4}{4!}+\cdots.$$
This is a little different from the definition
of $\tilde b_n$ in \cite{D}, which is changed to $b_n$ later in the same paper.
Finally, the Stirling numbers of the first kind $s(n, j)$ are defined by
$$ x(x-1)(x-2)\cdots(x-n+1)= \sum_{j=0}^n s(n,j) x^j.$$
Define
\begin{equation}\label{equ:Cnp}
 K_n(p):=(-1)^{n-1}\frac{b_n}{n!}-\frac{(-1)^n}{(n-1)!}\sum_{j=1}^{[n/2]}
\frac{B_{2j}}{2j} s(n-1, 2j-1)p^{2j} .
\end{equation}
By \cite[Thm.\ 1, (6.5) and Thm.\ 4]{D} and \cite[Thm.\ 3.1]{How} one gets:
\begin{thm} \label{thm:DilcherThm1}
If $p> 3$ is a prime, then for all integers $n>1$ we have
$$\sum_{j=1}^{p-1} \frac{q^j}{[j]_q^n}\equiv K_n(p) (1-q)^n    \pmod{[p]_q}.
$$
\end{thm}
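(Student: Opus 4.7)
The plan is to assemble the theorem directly from the three cited sources by matching their outputs term by term. Since the congruence is taken modulo $[p]_q$, the ambient ring may be regarded as $\Z[q]/([p]_q)$, in which $q$ becomes a primitive $p$-th root of unity in spirit, and $(1-q)^n$ is a unit times the natural scaling factor. The whole argument is algebraic; it does not require any new analytic input beyond what Dilcher and Howard already supply.

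First I would quote Dilcher's Theorem~1 from \cite{D} to express $\sum_{j=1}^{p-1} q^j/[j]_q^n$ modulo $[p]_q$ as $(1-q)^n$ times a determinant $D_n(p)$ of binomial coefficients (the ``$q^n$-version'' of his main identity). Next, I would invoke Dilcher's equation (6.5), which evaluates (or reduces) this determinant in closed form in terms of the Bernoulli numbers of the second kind $b_n$ together with an error term built from Bernoulli numbers $B_{2j}$ and Stirling numbers of the first kind $s(n-1,2j-1)$; this is precisely the shape appearing in the definition
\[
K_n(p)=(-1)^{n-1}\frac{b_n}{n!}-\frac{(-1)^n}{(n-1)!}\sum_{j=1}^{[n/2]}\frac{B_{2j}}{2j}\,s(n-1,2j-1)\,p^{2j}.
\]
At this stage one has the congruence in all but one technical ingredient: the identification of the coefficient of $b_n/n!$ and of the Stirling-Bernoulli combination on the nose.

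To close the gap between Dilcher's formula and $K_n(p)$ as defined in \eqref{equ:Cnp}, I would apply Howard's Theorem~3.1 in \cite{How}, which supplies the convolution identity between Bernoulli numbers of the second kind, ordinary Bernoulli numbers, and Stirling numbers $s(n,j)$ needed to absorb the auxiliary terms arising from the determinant expansion. A careful bookkeeping step here is mandatory: Dilcher's paper first uses $\tilde b_n$ and then switches to $b_n$, and one must verify that the sign $(-1)^{n-1}$ in $K_n(p)$ matches the convention in this excerpt.

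The only real obstacle I foresee is precisely that sign/normalization reconciliation. Once the definitions of $b_n$, $B_{2j}$, and $s(n-1,2j-1)$ used here are matched against the symbols in \cite{D} and \cite{How}, the three cited results fit together formally and yield
\[
\sum_{j=1}^{p-1}\frac{q^j}{[j]_q^n}\equiv K_n(p)(1-q)^n\pmod{[p]_q},
\]
as required. No new shuffle or generating-function machinery is needed for this particular theorem; its role in the paper is to serve as an input for the homogeneous multiple-sum congruences developed later.
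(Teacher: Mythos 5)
Your proposal follows exactly the route the paper itself takes: the paper offers no independent proof of this theorem, merely the citation ``By \cite[Thm.\ 1, (6.5) and Thm.\ 4]{D} and \cite[Thm.\ 3.1]{How} one gets,'' and your plan of chaining Dilcher's determinant expression through his Bernoulli-number evaluation and Howard's explicit formula for degenerate Bernoulli numbers is precisely that assembly, with the sign/normalization bookkeeping you flag being the only genuine work involved. So the approach matches the paper's, and your write-up is if anything more explicit about where each ingredient enters.
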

We will need the following easy generalization of this theorem.
\begin{thm} \label{thm:DilcherThm2}
If $p> 3$ is a prime, then for all integers $n>t\ge 1$ we have
\begin{equation}\label{equ:genmod}
 \sum_{j=1}^{p-1} \frac{q^{tj}}{[j]_q^n}\equiv (1-q)^n \sum_{i=0}^{t-1} {t-1\choose i}(-1)^i K_{n-i}(p)   \pmod{[p]_q}.
\end{equation}
Moreover,
\begin{equation}\label{equ:t=0}
 \sum_{j=1}^{p-1} \frac{1}{[j]_q^n}\equiv (1-q)^n \left(\frac{p-1}2+\sum_{j=2}^{n}  K_{j}(p) \right)  \pmod{[p]_q}.
\end{equation}
\end{thm}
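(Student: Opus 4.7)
The plan is to derive both parts by short inductions rooted at Theorem \ref{thm:DilcherThm1}, using the elementary identity $q^j = 1 - (1-q)[j]_q$ (which is just $(1-q)[j]_q = 1-q^j$ rewritten). Set $S_t(n) := \sum_{j=1}^{p-1} q^{tj}/[j]_q^n$. For (\ref{equ:genmod}), multiply the identity by $q^{(t-1)j}/[j]_q^n$ and sum over $j$ to obtain the two-variable recurrence
\begin{equation*}
S_t(n) = S_{t-1}(n) - (1-q)\,S_{t-1}(n-1).
\end{equation*}
Theorem \ref{thm:DilcherThm1} supplies the base case $t=1$: $S_1(n)\equiv (1-q)^n K_n(p)\pmod{[p]_q}$ for every $n>1$. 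One then inducts on $t$ under the standing hypothesis $n>t$; this is exactly what guarantees that both $S_{t-1}(n)$ and $S_{t-1}(n-1)$ fall under the inductive hypothesis (the latter because $n-1>t-1$). Substituting, shifting the index in the second sum by $i\mapsto i+1$, and combining the two series via Pascal's rule $\binom{t-2}{i}+\binom{t-2}{i-1}=\binom{t-1}{i}$ collapses everything into $(1-q)^n\sum_{i=0}^{t-1}\binom{t-1}{i}(-1)^i K_{n-i}(p)$, which is (\ref{equ:genmod}).

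Congruence (\ref{equ:t=0}) requires a separate induction, this time on $n$. Rewriting the basic identity as $1 = q^j + (1-q)[j]_q$, dividing by $[j]_q^n$, and summing gives
\begin{equation*}
T(n) := \sum_{j=1}^{p-1}\frac{1}{[j]_q^n} = S_1(n) + (1-q)\,T(n-1).
\end{equation*}
The Shi-Pan congruence (\ref{equ:SP1}), reduced modulo $[p]_q$, supplies the base case $T(1)\equiv \tfrac{p-1}{2}(1-q)$, and Theorem \ref{thm:DilcherThm1} feeds in $S_1(n)\equiv (1-q)^n K_n(p)$ for $n\ge 2$; induction on $n$ then yields (\ref{equ:t=0}) directly.

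There is no serious obstacle here; the only thing to watch is the bookkeeping of index ranges, which is what enforces the sharp restriction $n>t$ in (\ref{equ:genmod}) and what makes it necessary to borrow Shi-Pan's evaluation (\ref{equ:SP1}) separately in order to cover the single $n=1$ base for the induction leading to (\ref{equ:t=0}).
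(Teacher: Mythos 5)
Your proof is correct. For \eqref{equ:genmod} your route is essentially the paper's argument in inductive clothing: the paper writes $q^{tj}=q^j\bigl(1-(1-q^j)\bigr)^{t-1}$, expands by the binomial theorem, uses $1-q^j=(1-q)[j]_q$ to lower the power of $[j]_q$, and applies Theorem \ref{thm:DilcherThm1} termwise; your recurrence $S_t(n)=S_{t-1}(n)-(1-q)S_{t-1}(n-1)$ is exactly one step of that expansion, and unrolling it with Pascal's rule reproduces the same binomial sum. The one point both versions must (and do) respect is that Theorem \ref{thm:DilcherThm1} is only ever invoked with exponent $n-i\ge n-t+1\ge 2>1$, which is precisely where the hypothesis $n>t$ enters. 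For \eqref{equ:t=0} you genuinely add something: the paper dismisses it as ``a variation of'' a formula in \cite{D}, whereas your induction on $n$ via $T(n)=S_1(n)+(1-q)T(n-1)$, seeded by the mod-$[p]_q$ reduction of \eqref{equ:SP1} (the $[p]_q^2$-term there dies modulo $[p]_q$, giving $T(1)\equiv\frac{p-1}{2}(1-q)$), is a self-contained derivation using only the paper's own toolkit. That makes your write-up slightly longer than the paper's two-line proof but more complete where the paper defers to an external reference.
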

\begin{proof} If $t>1$ it is clear that
$$q^{tj}=q^j\big (1-(1-q^j)\big)^{t-1}=q^j\sum_{i=0}^{t-1} {t-1\choose i}(-1)^i  (1-q^j)^i.$$
So \eqref{equ:genmod} follows from Theorem \ref{thm:DilcherThm1} immediately. Congruence \eqref{equ:t=0} is a variation of \cite[(5.11)]{D}.
\end{proof}

All of the sums in Theorem \ref{thm:DilcherThm1} and \ref{thm:DilcherThm2} are special cases
of the $q$-analog of multiple harmonic sums.
The congruence properties of the classical multiple harmonic sums (MHS for short)
are systematically investigated in \cite{wols}. In this paper we shall study their
$q$-analogs which are natural generalizations
of the congruences obtained by Shi and Pan \cite{SP} and Dilcher \cite{D}.

Similar to its classical case (compare \cite{wols})
a $q$-analog of multiple harmonic sum ($q$-MHS for short) is defined as follows.
For $\ors:=(s_1,\dots, s_\ell)\in \N^\ell$, $\ort:=(t_1,\dots, t_\ell)\in \N^\ell$ and $n\in \ZN$ set
\begin{equation}\label{equ:defnH}
H_q^{(\ort)}(\ors;n):=\sum_{1\le k_1<\dots<k_\ell\le n}
\frac{q^{k_1t_1+\cdots +k_\ell t_\ell}}{[k_1]_q^{s_1}\cdots [k_\ell]_q^{s_\ell}}, \ \quad \sHq^{(\ort)}(\ors;n)=H_q^{(\ort)}(\ors;n)/(1-q)^{w(\ors)} ,
\end{equation}
where $w(\ors):=s_1+\cdots+s_\ell$ is the \emph{weight}, $\ell$ the \emph{depth} and $\ort$ the \emph{modifier}.
For trivial modifier we set
$$\Hq(\ors;n):=H_q^{(0,\dots,0)}(\ors;n), \ \qquad \sHq(\ors;n)=\Hq(\ors;n)/(1-q)^{w(\ors)}. $$
Note that in \cite{D} $\tHq(s;p-1):=H_q^{(1)}(s;p-1)$ are studied in some detail and are related to $\Hq(s;p-1)$.
Also note that  $H_q^{(s_1-1,\dots,s_\ell-1)}(\ors;n)$ are the partial sums of the most convenient
form of $q$-multiple zeta functions (see \cite{qmz}).

In this paper we mainly consider $q$-MHS with the trivial modifier.
By convention we set $H_q^{(\ort)}(\ors;r)=0$ for $r=0,\dots,\ell-1$, and
$H_q^{(\ort)}(\emptyset;n)=1$.
To save space, for an ordered set $(e_1,\dots,e_t)$ we denote by
$\{e_1,\dots, e_t\}^d$ the ordered set formed by repeating
$(e_1,\dots,e_t)$ $d$ times. For example $\Hq(\{s\}^\ell;n)$ will be called
a {\em homogeneous} sum.

Throughout the paper, we use short-hand $\Hq(\ors)$ to denote $\Hq(\ors;p-1)$ for some fixed prime $p$.

\section{Homogeneous $q$-MHS.}

It is extremely beneficial to study the so-called stuffle (or quasi-shuffle) relations
among MHS (see, for e.g., \cite{wols}). The same mechanism works equally well for $q$-MHS.

Recall that for any two ordered sets $(r_1,\dots,r_t)$ and
$(r_{t+1},\dots,r_n)$ the shuffle operation is defined by
$$\sh\big((r_1,\dots,r_t), (r_{t+1},\dots,r_n)\big):=
\bigcup_{\substack{\gs\text{ permutes } \{1,\dots,n\}, \\
\gs^{-1}(1)<\cdots<\gs^{-1}(t),\\
\gs^{-1}(t+1)<\cdots<\gs^{-1}(n)}}
 \big(r_{\gs(1)},\dots,r_{\gs(n)}\big).$$
Fix a positive integer $s$. For any $k=1,\dots, \ell-1,$ we have by stuffle relation
$$\sHq\big((\ell-k)s\big) \cdot \sHq\big(\{s\}^{k}\big)=
\sum_{\ors\in \sh\big(\{(\ell-k)s\},\{s\}^{k}\big)} \sHq(\ors )+
\sum_{\ors\in\sh\big(\{(\ell-k+1)s\},\{s\}^{k-1}\big)}\sHq(\ors).
$$
Applying $\sum_{k=1}^{\ell-1} (-1)^{\ell-k-1}$ on both sides we get
\begin{equation} \label{equ:homoShuffle}
\sHq\big(\{s\}^\ell\big)=\frac1\ell \sum_{k=0}^{\ell-1}  (-1)^{\ell-k-1} \sHq\big((\ell-k)s\big) \cdot
\sHq\big(\{s\}^{k}\big).
\end{equation}

\begin{thm} \label{thm:homoAll}
Let $s$ be a positive integer and let $\eta_s=\exp(2\pi i/s)$
be the $s$th primitive root of unity. Then
$$\sum_{\ell=0}^\infty \sHq\big(\{s\}^\ell \big) x^\ell
\equiv \frac{(-1)^s}{p^s x}\prod_{n=0}^{s-1}  \Big(1- (1-\eta_s^n (-x)^{1/s} )^p\Big) \pmod{[p]_q}.$$
\end{thm}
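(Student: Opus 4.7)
The plan is to bypass the shuffle recursion \eqref{equ:homoShuffle} and instead obtain the generating function directly as an elementary-symmetric-function product, then reduce modulo $[p]_q$ using the fact that in the quotient ring $\Z[q]/[p]_q\cong \Z[\zeta_p]$ the variable $q$ maps to a primitive $p$-th root of unity.

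First I would observe that $\Hq(\{s\}^\ell;p-1)$ is exactly the $\ell$-th elementary symmetric function in the $p-1$ quantities $1/[k]_q^s$, $k=1,\dots,p-1$, so
$$\sum_{\ell=0}^\infty \Hq(\{s\}^\ell;p-1)\, z^\ell =\prod_{k=1}^{p-1}\Big(1+\frac{z}{[k]_q^s}\Big).$$
Substituting $z=x/(1-q)^s$ and using $(1-q)[k]_q=1-q^k$ absorbs the $(1-q)^s$ factors and gives
$$F(x):=\sum_{\ell=0}^\infty \sHq(\{s\}^\ell)\,x^\ell =\prod_{k=1}^{p-1}\Big(1+\frac{x}{(1-q^k)^s}\Big) =\frac{\prod_{k=1}^{p-1}\big((1-q^k)^s+x\big)}{\prod_{k=1}^{p-1}(1-q^k)^s}.$$

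Next, writing $\beta_n:=\eta_s^n(-x)^{1/s}$ so that $Z^s+x=\prod_{n=0}^{s-1}(Z-\beta_n)$, each numerator factor decomposes as $(1-q^k)^s+x=\prod_n\bigl((1-\beta_n)-q^k\bigr)$, and swapping the order of the products yields
$$\prod_{k=1}^{p-1}\bigl((1-q^k)^s+x\bigr)=\prod_{n=0}^{s-1}\prod_{k=1}^{p-1}\bigl((1-\beta_n)-q^k\bigr).$$
The key arithmetic input is the cyclotomic congruence
$$\prod_{k=1}^{p-1}(T-q^k)\equiv T^{p-1}+T^{p-2}+\cdots+1 \pmod{[p]_q},$$
which holds because $T^p-1\equiv\prod_{k=0}^{p-1}(T-q^k)$ in $(\Z[q]/[p]_q)[T]$ (both sides are monic of degree $p$ and agree after $q\mapsto\zeta_p$). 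Evaluated at $T=1-\beta_n$ this gives $\prod_{k=1}^{p-1}\bigl((1-\beta_n)-q^k\bigr)\equiv \bigl(1-(1-\beta_n)^p\bigr)/\beta_n$, and at $T=1$ it gives $\prod_{k=1}^{p-1}(1-q^k)\equiv p$, so the denominator above is $\equiv p^s$.

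To finish, the elementary computation $\prod_{n=0}^{s-1}\beta_n=\bigl((-x)^{1/s}\bigr)^s\prod_n\eta_s^n=(-x)(-1)^{s-1}=(-1)^s x$ assembles everything into
$$F(x)\equiv\frac{1}{p^s}\cdot\frac{\prod_{n=0}^{s-1}\bigl(1-(1-\beta_n)^p\bigr)}{(-1)^s x}=\frac{(-1)^s}{p^s x}\prod_{n=0}^{s-1}\Bigl(1-(1-\eta_s^n(-x)^{1/s})^p\Bigr)\pmod{[p]_q},$$
which is the claim. The only real obstacle is formal-power-series bookkeeping around the symbol $(-x)^{1/s}$: individually the factors involve a fractional power, but the full product over $n$ is symmetric in the $s$ roots of $Z^s+x$ and so expands to an honest polynomial in $x$ with coefficients in $\Z[q]/[p]_q$. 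Once this is set up (or equivalently after clearing $p^sx$ to both sides to work only with polynomials in $x$), all the manipulations above are purely algebraic.
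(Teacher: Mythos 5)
Your proof is correct, but it takes a genuinely different route from the paper. The paper starts from the stuffle recursion \eqref{equ:homoShuffle} (which is Newton's identity relating the elementary symmetric functions $\sHq(\{s\}^\ell)$ to the power sums $\sHq(\ell s)\equiv P_{\ell s}$), turns it into a first-order ODE for the generating function $w(x)$, feeds in Dilcher's closed form \eqref{equ:gen} for $\sum P_n x^n$ after extracting every $s$th term by averaging over $s$th roots of unity, and then integrates the logarithmic derivative and fixes the constant of integration. You instead exploit the elementary-symmetric-function structure directly: $\sum_\ell \sHq(\{s\}^\ell)x^\ell=\prod_{k=1}^{p-1}\bigl(1+x/(1-q^k)^s\bigr)$ is a finite product on the nose, and the whole theorem reduces to the single cyclotomic input $\prod_{k=1}^{p-1}(T-q^k)\equiv (T^p-1)/(T-1)\pmod{[p]_q}$ evaluated at $T=1-\eta_s^n(-x)^{1/s}$ and at $T=1$. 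All the steps check out, including $\prod_n\beta_n=(-1)^sx$ and the denominator $\equiv p^s$. What your approach buys is economy and self-containedness: no shuffle relation, no differential equation, no appeal to \eqref{equ:gen} (your cyclotomic congruence at $T=1-z$ in fact reproves it), and no formal-power-series delicacy about $\ln w$ or the constant term --- the identity before reduction is an exact polynomial identity of degree $p-1$ in $x$. What the paper's approach buys is that the shuffle/stuffle machinery it sets up is reused elsewhere (Section 3) and generalizes to non-homogeneous sums where no product formula exists. The one point you rightly flag --- that $(-x)^{1/s}$ only appears through symmetric combinations and the division by $p^sx$ must be interpreted appropriately --- is an issue equally present in the paper's own formulation, so it is not a gap in your argument specifically.
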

\begin{proof}
Let $\gz=\exp(2\pi i/p)$ be the primitive $p$th root of unity and set
\begin{equation}\label{equ:Pk}
P_n= \sum_{j=1}^{p-1} \frac{1}{(1-\gz^j )^n}.
\end{equation}
It is easy to see that $ \sHq(n)\equiv P_n\pmod{[p]_q}$.
By using partial fractions Dilcher \cite[(4.2)]{How} obtained essentially the following generating function of $P_n$:
\begin{equation}\label{equ:gen}
 g(x):=\sum_{n=0}^\infty P_n x^n =- \frac{px(x-1)^{p-1} } {1- (1-x)^p}.
\end{equation}
Let $a_\ell=\sHq\big(\{s\}^\ell\big)$ for all $\ell\ge 0$.
Let $w(x)=\sum_{\ell=0}^\infty a_\ell x^\ell$ be its the generating function.
By \eqref{equ:homoShuffle} we get
$$ w(x)=\sum_{\ell=0}^\infty a_\ell x^\ell
 \equiv 1+\sum_{\ell=1}^\infty  \frac{1}{\ell} \sum_{k=0}^{\ell-1} (-1)^{\ell-k-1} P_{(\ell-k)s} a_k x^\ell  \pmod{[p]_q}.$$
Differentiating both sides and changing index $\ell\to\ell+1$ we get modulo  $[p]_q$
\begin{align*}
 w'(x)\equiv &\sum_{\ell=0}^\infty   \sum_{k=0}^{\ell} (-1)^{\ell-k} P_{(\ell-k+1)s} a_k x^\ell \\
\equiv&  \sum_{k=0}^\infty \sum_{\ell=k}^\infty (-1)^{\ell-k} P_{(\ell-k+1)s} a_k x^\ell \\
\equiv&  w(x) \sum_{\ell=0}^\infty P_{(\ell+1)s} (-x)^{\ell}     \\
\equiv&   \frac{w(x)}{-x}   \left(\sum_{\ell=0}^\infty P_{ \ell s} (-x)^{\ell}+1\right)     \\
\equiv&  \frac{w(x)}{-sx} \left(s+ \sum_{n=0}^{s-1}
  \sum_{\ell=0}^\infty P_\ell  (\eta_s^n (-x)^{1/s})^{\ell} \right)    \\
\equiv&  \frac{w(x)}{-sx} \Big(s+  \sum_{n=0}^{s-1} g\big(\eta_s^n (-x)^{1/s} \big)  \Big)  \\
\equiv&  \frac{w(x)}{-sx} \left( s -\sum_{n=0}^{s-1}
\frac{p\eta^n (-x)^{1/s} (\eta^n(-x)^{1/s} -1)^{p-1} } {1- (1-\eta_s^n (-x)^{1/s} )^p} \right) .
\end{align*}
Here $\eta_s=\exp(2\pi i/s)$ is the $s$th primitive root of unity.
Thus
$$(\ln w(x))'= \left( -(\ln x)'+\sum_{n=0}^{s-1}
\frac{(1- (1-\eta^n (-x)^{1/s} )^p)' } {1- (1-\eta_s^n (-x)^{1/s} )^p} \right).$$
Therefore by comparing the constant term we get
$$w(x)\equiv \frac{(-1)^s}{p^s x}\prod_{n=0}^{s-1}  \Big(1- (1-\eta_s^n (-x)^{1/s} )^p\Big) \pmod{[p]_q}$$
as desired.
\end{proof}

\begin{cor} \label{cor:homo1}
For all positive integer $\ell<p$ we have
$$\Hq\big(\{1\}^\ell\big)\equiv \frac1{\ell+1} {p-1\choose \ell} \cdot(1-q)^{\ell} \qquad \pmod{[p]_q}.$$
\end{cor}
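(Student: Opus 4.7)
The plan is to specialize Theorem \ref{thm:homoAll} to $s=1$ and then read off coefficients via the binomial theorem. With $s=1$ we have $\eta_1=1$, so the product over $n=0,\dots,s-1$ collapses to the single factor $1-(1-(-x))^p=1-(1+x)^p$, and the theorem immediately gives the closed form
$$\sum_{\ell=0}^\infty \sHq\bigl(\{1\}^\ell\bigr)\, x^\ell \ \equiv\ \frac{-1}{px}\bigl(1-(1+x)^p\bigr) \ =\ \frac{(1+x)^p-1}{px} \pmod{[p]_q}.$$

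Next I would expand the numerator by the binomial theorem to obtain
$$\frac{(1+x)^p-1}{px} \ =\ \sum_{\ell=0}^{p-1}\frac{1}{p}\binom{p}{\ell+1}x^\ell \ =\ \sum_{\ell=0}^{p-1}\frac{1}{\ell+1}\binom{p-1}{\ell}x^\ell,$$
where the last identity $\tfrac{1}{p}\binom{p}{\ell+1}=\tfrac{1}{\ell+1}\binom{p-1}{\ell}$ is elementary. Comparing coefficients of $x^\ell$ yields $\sHq(\{1\}^\ell)\equiv \tfrac{1}{\ell+1}\binom{p-1}{\ell}\pmod{[p]_q}$ for $1\le \ell\le p-1$, and multiplying through by $(1-q)^\ell$ converts $\sHq$ back into $\Hq$ via definition \eqref{equ:defnH}, producing the stated congruence.

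I do not anticipate any serious obstacle here: the corollary is a clean specialization, with all of the analytic work concentrated in Theorem \ref{thm:homoAll}. The only minor point worth verifying is that the coefficients $\tfrac{1}{p}\binom{p}{\ell+1}$ are genuine integers in the relevant range, which follows from the classical divisibility $p\mid \binom{p}{k}$ for $1\le k\le p-1$ when $1\le \ell \le p-2$; the boundary case $\ell=p-1$ must be interpreted in the rational localization consistent with the paper's convention (as $\Hq(\{1\}^{p-1})$ itself is not a polynomial in $q$, but rather $(1-q)^{p-1}/\prod_{k=1}^{p-1}(1-q^k)$).
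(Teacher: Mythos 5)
Your proposal is correct and follows essentially the same route as the paper: specialize Theorem \ref{thm:homoAll} to $s=1$ to get $\frac{(1+x)^p-1}{px}$, expand by the binomial theorem using $\frac1p\binom{p}{\ell+1}=\frac1{\ell+1}\binom{p-1}{\ell}$, and compare coefficients. The extra remarks on integrality and the boundary case $\ell=p-1$ are reasonable housekeeping that the paper omits.
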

\begin{proof} By the theorem we get
\begin{align*}
\sum_{\ell=0}^\infty \sHq\big(\{1\}^\ell \big) x^\ell
\equiv & \frac{(1+x)^p-1}{px}   \\
\equiv & \frac1{px} \sum_{\ell=0}^\infty  {p\choose \ell+1}  x^{\ell+1}
\equiv\sum_{\ell=0}^\infty \frac1{\ell+1} {p-1\choose \ell}  x^\ell \pmod{[p]_q}.
\end{align*}
The corollary follows immediately.
\end{proof}

\begin{cor} \label{cor:homo2}
For every positive integer $\ell<p$ we have
\begin{equation*}
 \Hq\big(\{2\}^\ell\big)\equiv (-1)^\ell \frac{2\cdot \ell!}{(2\ell+2)!}
   {p-1\choose \ell}\cdot F_{2,\ell}(p) \cdot (1-q)^{2\ell} \pmod{[p]_q},
\end{equation*}
where $F_{2,\ell}(p)$ is a monic polynomial in $p$ of degree $\ell$.
\end{cor}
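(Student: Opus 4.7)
The plan is to specialize Theorem~\ref{thm:homoAll} to $s=2$ and read off the $x^\ell$-coefficient of the resulting generating function. Writing $y=(-x)^{1/2}$, so that $\eta_2=-1$ and $y^2=-x$, the product collapses via $(1-y)(1+y)=1+x$ to
\[
\prod_{n=0}^{1}\bigl(1-(1-(-1)^n y)^p\bigr) = 1 - (1-y)^p - (1+y)^p + (1+x)^p.
\]
Since $(1-y)^p+(1+y)^p$ involves only even powers of $y$, the right-hand side is already a polynomial in $x$. Dividing by $p^2 x$ and extracting the coefficient of $x^\ell$ then gives, uniformly for $0\le\ell<p$,
\[
\sHq(\{2\}^\ell) \equiv \frac{1}{p^2}\binom{p}{\ell+1} + \frac{2(-1)^\ell}{p^2}\binom{p}{2\ell+2} \pmod{[p]_q},
\]
the second binomial vanishing automatically when $2\ell+2>p$.

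Next I would factor $\binom{p-1}{\ell}$ out of both terms using the identities $\binom{p}{\ell+1}=\tfrac{p}{\ell+1}\binom{p-1}{\ell}$ and $\binom{p}{2\ell+2}=\tfrac{p\,\ell!}{(2\ell+2)!}\binom{p-1}{\ell}\prod_{j=\ell+1}^{2\ell+1}(p-j)$. Collecting the overall prefactor $(-1)^\ell\cdot 2\ell!/(2\ell+2)!$ dictated by the statement then forces
\[
F_{2,\ell}(p) = \frac{1}{p}\left[\prod_{j=\ell+1}^{2\ell+1}(p-j) + \frac{(-1)^\ell (2\ell+2)!}{2(\ell+1)!}\right].
\]

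The main step is to verify that $F_{2,\ell}(p)$ is indeed a monic polynomial in $p$ of degree $\ell$. For polynomiality I would check the bracket vanishes at $p=0$: there $\prod_{j=\ell+1}^{2\ell+1}(-j) = (-1)^{\ell+1}(2\ell+1)!/\ell!$ and $\tfrac{(2\ell+2)!}{2(\ell+1)!}=(2\ell+1)!/\ell!$, so the $(-1)^\ell$-weighted sum cancels. Monicity and the degree then follow immediately from the leading term $p^{\ell+1}$ of $\prod_{j=\ell+1}^{2\ell+1}(p-j)$, which becomes $p^\ell$ after division by $p$. As a sanity check, $\ell=1$ should give $F_{2,1}(p)=p-5$, recovering Shi--Pan's congruence \eqref{equ:SP2}.

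I anticipate no serious obstacle beyond careful sign bookkeeping through the substitution $y^2=-x$ and the various $(-1)^\ell$ factors; everything else reduces to routine binomial manipulations.
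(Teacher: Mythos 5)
Your derivation is correct, and it reaches the stated congruence by a genuinely cleaner route than the paper. Both arguments start from Theorem \ref{thm:homoAll} with $s=2$, but the paper expands the product $\bigl(1-(1-i\sqrt{x})^p\bigr)\bigl(1-(1+i\sqrt{x})^p\bigr)$ as a squared modulus, separating even and odd binomial coefficients; this produces $\sHq\big(\{2\}^\ell\big)$ as a difference of two convolution sums $\sum_{j+k=\ell}\binom{p}{2j+1}\binom{p}{2k+1}-\sum_{j+k=\ell+1}\binom{p}{2j}\binom{p}{2k}$, after which the common factor $\ell!\binom{p-1}{\ell}$ must be extracted term by term and the leading coefficient computed separately via the auxiliary polynomials $C_1$, $C_2$. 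You instead expand $(1-A)(1-B)=1-A-B+AB$ with $AB=(1-y^2)^p=(1+x)^p$, which collapses the generating function to a polynomial in $x$ with only two surviving binomial families and yields the closed form
$$\sHq\big(\{2\}^\ell\big)\equiv \frac1{p^2}\binom{p}{\ell+1}+\frac{2(-1)^\ell}{p^2}\binom{p}{2\ell+2}\pmod{[p]_q},$$
which agrees with the paper's double-sum expression (compare the coefficient of $x^{2\ell+2}$ in $(1-x)^p(1+x)^p=(1-x^2)^p$). From there your explicit $F_{2,\ell}(p)$, the cancellation of the bracket at $p=0$ (so that division by $p$ leaves a polynomial), and the monicity and degree count are all correct, and the check $F_{2,1}(p)=p-5$ indeed recovers \eqref{equ:SP2}. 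Your expansion is essentially the same device the paper reserves for the harder $s=3$ case in Corollary \ref{cor:homo33}; applied to $s=2$ it buys an explicit formula for $F_{2,\ell}$ rather than only its existence, degree, and leading coefficient.
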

\begin{proof}
By Theorem \ref{thm:homoAll} we have modulo $[p]_q$
\begin{align*}
 \sum_{\ell=0}^\infty \sHq\big(\{2\}^\ell \big) x^\ell
\equiv& \frac{1}{p^2 x} \Big(1- (1- i\sqrt{x} )^p\Big)\Big(1- (1+i\sqrt{x} )^p\Big)\\
\equiv& \frac{1}{p^2 x} \left|  \sum_{j=1}^{(p-1)/2} {p\choose 2j}(-1)^j x^{j}
+i\sqrt{x}  \sum_{j=0}^{(p-1)/2} {p\choose 2j+1}(-1)^j x^{j}
\right|^2,
\end{align*}
which easily yields
$$ \sHq\big(\{2\}^\ell\big)\equiv  \frac{(-1)^\ell}{p^2} \left\{
  \sum_{\substack{j+k=\ell\\ 0\le j,k< p/2}} {p\choose 2j+1}{p\choose 2k+1}
  -  \sum_{\substack{j+k=\ell+1 \\ 1\le j,k< p/2}} {p\choose 2j}{p\choose 2k}
 \right\} . $$
In the first sum above
if $j+k=\ell+1$ and $1\le j,k< p/2$ then
we may assume $j>\ell/2$. Then $(\ell+1)! {p\choose \ell+1}$ is a factor
of $(2j+1)! {p\choose 2j+1}$ as a polynomial of $p$, so is $\ell! {p-1\choose \ell}$.
Similarly we can see that $\ell! {p-1\choose \ell}$ is a factor of the second sum.

In order to determine the leading coefficient we set
 \begin{align*}
 C_1(x)=&\sum_{j=0}^\ell \frac{(2\ell+2)!\, x^{2j+1}}{(2j+1)!(2l-2j+1)!}=
\frac{(x+1)^{2\ell+2}-(x-1)^{2\ell+2}}{2 },\\
 C_2(x)=&\sum_{j=0}^{\ell+1} \frac{(2\ell+2)!\, x^{2j}}{(2j)!(2l-2j+2)!}=
\frac{(x+1)^{2\ell+2}+(x-1)^{2\ell+2}}{2 }.
 \end{align*}
Hence
 \begin{align*}
  \ &\sum_{\substack{j+k=\ell\\ 0\le j,k< p/2}} \frac1{(2j+1)!(2k+1)!}
  -  \sum_{\substack{j+k=\ell+1 \\ 1\le j,k< p/2}} \frac1{(2j)!(2k)!}\\
  =& \frac{C_1(1)- (C_2(1)- 2)}{(2\ell+2)!} =\frac{2}{(2\ell+2)!}.
 \end{align*}
This finishes the proof of the corollary.
\end{proof}

\begin{cor} \label{cor:homo33}
Let $\ell$ be a positive integer. Set $\gd_\ell=(1+(-1)^\ell)$ and $L=3\ell+3$. Then for every
prime $p\ge L$ we have modulo $[p]_q$
\begin{equation}  \label{equ:homo333}
 \Hq\big(\{3\}^\ell\big)
\equiv \left\{
          \begin{array}{ll}
   \displaystyle        \frac{- 3 \cdot \ell!} {(3\ell+1)!}
   {p-1\choose \ell}\cdot F_{3,\ell}(p)\cdot (1-q)^{3\ell} \phantom{\sum_{a}} , & \hbox{if $\ell$ is odd,} \\
  \displaystyle         \frac{6\cdot \ell!}{(3\ell+3)!}
   {p-1\choose \ell}\cdot F_{3,\ell}(p)\cdot (1-q)^{3\ell} \phantom{\sum^{a}} , & \hbox{if $\ell$ is even,}
          \end{array}
        \right.
\end{equation}
where $F_{3,\ell}(p)$ is a monic polynomial in $p$ of degree $2\ell-1$
if $\ell$ is odd and of degree $2\ell$ if $\ell$ is even.
\end{cor}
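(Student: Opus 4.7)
The plan is to mimic the proofs of Corollaries~\ref{cor:homo1} and~\ref{cor:homo2}, applying Theorem~\ref{thm:homoAll} with $s=3$ and $\eta_3=\exp(2\pi i/3)$. Setting $y=(-x)^{1/3}$, so that $y^3=-x$, the theorem yields
\begin{equation*}
\sum_{\ell\ge 0}\sHq(\{3\}^\ell)\,x^\ell\equiv\frac{-1}{p^3x}\prod_{n=0}^2\bigl(1-(1-\eta_3^n y)^p\bigr)\pmod{[p]_q}.
\end{equation*}
Expanding each factor via the binomial theorem and extracting the coefficient of $x^\ell$ gives
\begin{equation*}
\sHq(\{3\}^\ell)\equiv\frac{1}{p^3}\sum_{\substack{j_1+j_2+j_3=3\ell+3\\1\le j_i\le p}}\binom{p}{j_1}\binom{p}{j_2}\binom{p}{j_3}\,\eta_3^{j_2+2j_3}\pmod{[p]_q}.
\end{equation*}

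To extract the factor $\binom{p-1}{\ell}$, I observe that $j_1+j_2+j_3=3\ell+3$ forces $\max j_i\ge\ell+1$, so the corresponding $\binom{p}{j_i}$ contains $p(p-1)\cdots(p-\ell)=p\,\ell!\,\binom{p-1}{\ell}$ as a polynomial factor; combined with one factor of $p$ from each of the other two binomials, the triple product carries $p^3\binom{p-1}{\ell}$ as a polynomial factor, cancelling the denominator $p^3$ and leaving $\binom{p-1}{\ell}$ out front.

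To compute the leading coefficient of the resulting polynomial in $p$, I substitute $u=py$ and use $\binom{p}{j}=\frac{p^j}{j!}\bigl(1-\binom{j}{2}/p+O(1/p^2)\bigr)$ to expand
\begin{equation*}
1-(1-\eta_3^n y)^p=\bigl(1-e^{-\eta_3^n u}\bigr)+\frac{\eta_3^{2n}u^2}{2p}\,e^{-\eta_3^n u}+O(1/p^2).
\end{equation*}
The zeroth-order contribution telescopes using $1+\eta_3+\eta_3^2=0$ to
\begin{equation*}
\prod_{n=0}^2\bigl(1-e^{-\eta_3^n u}\bigr)=2\sum_{n=0}^2\sinh(\eta_3^n u)=6\sum_{j\ge 0}\frac{u^{6j+3}}{(6j+3)!},
\end{equation*}
whose coefficient of $u^{3\ell+3}$ is $6/(3\ell+3)!$ if $\ell$ is even and vanishes if $\ell$ is odd. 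Combined with the leading coefficient $p^\ell/\ell!$ of $\binom{p-1}{\ell}$, this disposes of the even case with prefactor $6\ell!/(3\ell+3)!$ and $F_{3,\ell}$ monic of degree $2\ell$.

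The main obstacle is the odd case, where the leading contribution vanishes and one must track the $1/p$ correction to the product. Using the auxiliary identity $e^{-Au}(1-e^{-Bu})(1-e^{-Cu})=1+e^{-Au}-e^{Bu}-e^{Cu}$, valid whenever $A+B+C=0$, the first-order correction sum simplifies to
\begin{equation*}
\sum_{n=0}^2\eta_3^{2n}e^{-\eta_3^n u}\prod_{m\ne n}\bigl(1-e^{-\eta_3^m u}\bigr)=2\sum_{n=0}^2\eta_3^{-n}\cosh(\eta_3^n u)=6\sum_{j\ge 0}\frac{u^{6j+4}}{(6j+4)!},
\end{equation*}
whose coefficient of $u^{3\ell+1}$ equals $6/(3\ell+1)!$ precisely when $\ell$ is odd. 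Combining this with the overall sign $(-1)^\ell=-1$ and dividing by $\binom{p-1}{\ell}$ delivers the prefactor $-3\ell!/(3\ell+1)!$ and shows $F_{3,\ell}$ is monic of degree $2\ell-1$, completing the odd case.
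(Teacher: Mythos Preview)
Your argument is correct and parallels the paper's in its overall skeleton (apply Theorem~\ref{thm:homoAll} with $s=3$, pass to the triple sum to extract the factor $\binom{p-1}{\ell}$), but the way you determine the leading coefficient is genuinely different from the paper's.

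The paper expands $\prod_{a=0}^{2}\bigl(1-(1-\eta^a y)^p\bigr)$ by inclusion--exclusion, using $1+\eta+\eta^2=0$ to simplify the single, double, and triple products into closed binomial expressions; this yields the explicit formula
\[
\sHq(\{3\}^\ell)\equiv \frac{1}{p^3}\left\{3\gd_\ell\binom{p}{L}+3(-1)^\ell\sum_{k\ge 1}\binom{p}{L-k}\binom{L-k}{k}+\binom{p}{\ell+1}\right\},
\]
from which the leading coefficients $\dfrac{6}{(3\ell+3)!}$ (even~$\ell$) and $\dfrac{-3}{(3\ell+1)!}$ (odd~$\ell$) are read off directly. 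Your route instead rescales $y=u/p$ and uses the exact $1/p$--expansion of $(1-z/p)^p$ about $e^{-z}$, reducing the leading-term question to picking out coefficients in $\prod_n\bigl(1-e^{-\eta_3^n u}\bigr)=2\sum_n\sinh(\eta_3^n u)$ and, for odd $\ell$, in the first-order correction $2\sum_n\eta_3^{-n}\cosh(\eta_3^n u)$; the hyperbolic identities you invoke (both consequences of $1+\eta_3+\eta_3^2=0$) are correct, and since the coefficient of $y^{3\ell+3}$ is a polynomial in $p$ of bounded degree, the $1/p$--expansion is finite and the argument is rigorous. The paper's computation is more elementary and gives a closed form for the whole polynomial, whereas your exponential approach is more conceptual and would generalize uniformly to larger~$s$ without having to discover new binomial identities case by case.
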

\begin{proof}
Let $\eta=\exp(2\pi i/3)$. Then $\eta^2+\eta+1=0$. By Theorem \ref{thm:homoAll} we have
\begin{equation}\label{equ:homo3step1}
 \sum_{\ell=0}^\infty \sHq\big(\{3\}^\ell \big) x^\ell
\equiv \frac{-1}{p^3 x} \prod_{a=0}^2 \Big(1-(1-\eta^a\sqrt[3]{-x} )^p\Big).
\end{equation}
We now use two ways to expand this. Set $y=\sqrt[3]{-x}$. First, the product on the right hand side of \eqref{equ:homo3step1}
can be expressed as
\begin{align*}
& 1-\sum_{a=0}^2 (1-\eta^ay )^p +\sum_{a=0}^2 (1-\eta^ay )^p (1-\eta^{a+1}y )^p
-\prod_{a=0}^2 (1-\eta^ay )^p\\
=&  1- \sum_{j=0}^{p} {p\choose j} \sum_{a=0}^2 \eta^{aj} y^j+
\sum_{a=0}^2 (1+\eta^ay +\eta^{a+1} y^2 )^p
-(1+x)^p \\
=&  1-3\sum_{j=0}^{[p/3]} {p\choose 3j}x^j+
 3 \sum_{\substack{j,k\ge 0,j+k<p \\ 2j+k \equiv 0 (3) }}  \frac{p! \, (-x)^{(j+2k)/3}}{j!k!(p-j-k)!}
-(1+x)^p .
\end{align*}
Thus for $\ell>0$ we get
$$
 \sHq\big(\{3\}^\ell\big)\equiv  \frac{1}{p^3} \left\{
 3\gd_\ell {p\choose L} +(-1)^{\ell} \cdot 3
  \sum_{k\ge 1} {p\choose L-k}{L-k \choose k}
+ {p\choose \ell+1}
 \right\}
$$
Note that if $\ell$ is odd then the degree
of the polynomial is reduced to $3\ell-1$ with leading coefficient given by
$$(-1)^{\ell} \cdot 3 \frac{1}{(L-1)!}{L-1 \choose 1}=\frac{-3}{(L-2)!}=\frac{-3}{(3\ell+1)!}$$
as we wanted.

Now to prove $\ell!{p\choose \ell}$ is a factor we use the following expansion of
\eqref{equ:homo3step1}:
$$  \sum_{\ell=0}^\infty
\frac{1}{p^3 x} \sum_{ j,k,n \ge 1 }
(-1)^{j+k+n} {p\choose j}{p\choose k} {p\choose n} x^{(j+k+n)/3} \eta^{k+2n}.$$
Thus
$$\sHq\big(\{3\}^\ell \big) \equiv
\frac{1}{p^3} \sum_{\substack{1\le j,k,n\le p\\ j+k+n=3\ell+3}}
(-1)^{j+k+n} {p\choose j}{p\choose k} {p\choose n} \eta^{k+2n} \quad \pmod{[p]_q}.$$
Notice that $j+k+n=3\ell+3$ implies one of the indices, say $j$, is at least $\ell+1$.
Then clearly ${p\choose j}$ contains $\ell!{p\choose \ell}$ as a factor, therefore so
does $\sHq\big(\{3\}^\ell \big) \pmod{[p]_q}$. This completes the proof of the corollary.
\end{proof}

\section{Some non-homogeneous $q$-MHS congruences.}

In this section we consider some non-homogeneous $q$-MHS of depth two with modifiers of special type.

\begin{thm}
Let $m,n$ be two positive integers. For every prime $p$ we have
$$
H_q^{(m,n)}(2m,2n)\equiv \frac12\left\{f(m;p)f(n;p)-f(m+n;p) \right\} \pmod{[p]_q}.
$$
where
$$f(N;p)=(1-q)^{2N}\sum_{i=0}^{N-1} {N-1\choose i}(-1)^i K_{2N-i}(p)$$
\end{thm}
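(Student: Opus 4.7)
The plan is to apply the stuffle (quasi-shuffle) product of two depth-one $q$-MHS to obtain
$$H_q^{(m)}(2m)\cdot H_q^{(n)}(2n) = H_q^{(m,n)}(2m,2n) + H_q^{(n,m)}(2n,2m) + H_q^{(m+n)}(2m+2n),$$
which is immediate from decomposing the double sum over $1\le k_1,k_2\le p-1$ according to $k_1<k_2$, $k_1>k_2$, and $k_1=k_2$. Next I would show that the two depth-two pieces on the right are congruent modulo $[p]_q$, and finally identify each of the three depth-one sums with the function $f(\cdot\,;p)$ via Theorem \ref{thm:DilcherThm2}.

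For the symmetry step, in $H_q^{(m,n)}(2m,2n)$ substitute $k_1\mapsto p-j_2$ and $k_2\mapsto p-j_1$. Since $q^p\equiv 1\pmod{[p]_q}$, one checks that $[p-k]_q\equiv -q^{-k}[k]_q$, hence $[p-k]_q^{2s}\equiv q^{-2sk}[k]_q^{2s}$, while the modifier contributes $q^{m(p-j_2)+n(p-j_1)}\equiv q^{-mj_2-nj_1}$. Multiplying these factors together, the summand becomes $q^{mj_2+nj_1}/([j_1]_q^{2n}[j_2]_q^{2m})$ summed over $1\le j_1<j_2\le p-1$, which is exactly $H_q^{(n,m)}(2n,2m)$. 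The even exponents $2m$ and $2n$ are essential here, since they absorb the sign $-1$ in the reflection formula.

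For the final step, apply Theorem \ref{thm:DilcherThm2} with $(t,n)=(N,2N)$ for $N=m$, $n$, and $m+n$; the required inequality $n>t\ge 1$ reduces to $N\ge 1$. This gives $H_q^{(N)}(2N)\equiv f(N;p)\pmod{[p]_q}$ in each case. Substituting these three identifications and the symmetry relation from Step two into the stuffle product yields
$$2\,H_q^{(m,n)}(2m,2n)\equiv f(m;p)f(n;p)-f(m+n;p)\pmod{[p]_q},$$
and dividing by $2$ (legitimate in the rational setting used throughout the paper, where $2$ is coprime to $[p]_q$ for odd $p$) completes the proof. The main subtlety is the reflection symmetry of Step two; once that is handled the rest is routine bookkeeping.
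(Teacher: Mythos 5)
Your proof is correct and follows essentially the same route as the paper's: the stuffle decomposition of $H_q^{(m)}(2m)\cdot H_q^{(n)}(2n)$, the reflection $k\mapsto p-k$ giving $H_q^{(m,n)}(2m,2n)\equiv H_q^{(n,m)}(2n,2m)\pmod{[p]_q}$, and Theorem \ref{thm:DilcherThm2} to identify each depth-one piece with $f(\cdot\,;p)$. Your treatment of the reflection step is in fact somewhat more explicit than the paper's about why the modifier $(m,n)$ paired with the even exponents $(2m,2n)$ makes both the powers of $q$ and the signs cancel.
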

\begin{proof}
By definition and substitution $i\to p-i$ and $j\to p-j$ we have
\begin{alignat}{2}
 \sHq^{(m,n)}(2m,2n)=& \sum_{1\le i<j<p} \frac{q^{mi+nj}}{(1-q^i)^{2m}(1-q^j)^{2n}}  \notag \\
=& \sum_{1\le j<i<p} \frac{q^{pm+pn-mi-nj}}{(1-q^{p-i})^{2m}(1-q^{p-j})^{2n}} \notag \\
\equiv & \sum_{1\le j<i<p} \frac{q^{mi+nj}}{(q^i-q^{p})^{2m}(q^j-q^{p})^{2n}}  \qquad &\pmod{[p]_q} \notag\\
\equiv & \sum_{1\le j<i<p} \frac{q^{mi+nj}}{(1-p^i)^{2m}(1-p^j)^{2n}}  &\pmod{[p]_q} \notag \\
\equiv & \sHq^{(n,m)}(2n,2m)       &\pmod{[p]_q} \label{equ:rev}
\end{alignat}
By shuffle relation we have
$$ \sHq^{(m)}(2m) \sHq^{(n)}(2n)=\sHq^{(m,n)}(2m,2n)+\sHq^{(n,m)}(2n,2m)+\sHq^{(m+n)}(2m+2n).
$$
Together with \eqref{equ:rev} this yields
$$ 2\sHq^{(m,n)}(2m,2n)\equiv \sHq^{(m)}(2m) \sHq^{(n)}(2n)-\sHq^{(m+n)}(2m+2n) \pmod{[p]_q}.$$
Our theorem follows from \eqref{equ:genmod} quickly.
\end{proof}

In the study of $q$-multiple zeta functions the following function appears naturally
(see \cite[(47)]{qmz} or \cite[Theorem 1]{B}):
$$\varphi_q(n)= \sum_{k=1}^\infty (k-1)\frac{q^{(n-1)k}}{[k]_q^n}=
\sum_{k=1}^\infty \frac{kq^{(n-1)k}}{[k]_q^n}-\zeta_q(n),$$
where $\zeta_q(n)=\sum_{k=1}^\infty \frac{q^{(n-1)k}}{[k]_q^n}$ is the $q$-Riemann zeta
value defined by Kaneko et al. in \cite{KKW}. Using the results we have
obtained so far in this paper we discover  a congruence
related to the partial sums of $\varphi_q(2)$.
\begin{prop}
For every prime $p$ we have
$$
\sum_{k=1}^{p-1} \frac{ k q^{k}}{[k]_q^2} \equiv -\frac{p(p-1)(p+1)}{24} (1-q)^2 \pmod{[p]_q}.$$
\end{prop}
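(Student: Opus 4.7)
The plan is to mimic the $k \mapsto p-k$ symmetry that was used in the proof of the preceding theorem. The basic ingredient is $q^p \equiv 1 \pmod{[p]_q}$, which implies the pair of congruences
$$q^{p-k}\cdot q^{k} \equiv 1 \quad\text{and}\quad q^{2k}\,[p-k]_q^2 \equiv [k]_q^2 \pmod{[p]_q};$$
the second follows from $q^k(1-q^{p-k}) = q^k - q^p \equiv -(1-q^k) \pmod{[p]_q}$ after squaring. Multiplying the two gives the clean identity
$$\frac{q^{p-k}}{[p-k]_q^2} \;\equiv\; \frac{q^{k}}{[k]_q^2} \pmod{[p]_q}.$$

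Writing $S$ for the sum appearing in the proposition and substituting $k \mapsto p-k$ throughout, I would then obtain
$$S \;\equiv\; \sum_{k=1}^{p-1}\frac{(p-k)\,q^{k}}{[k]_q^2} \;=\; p\,H_q^{(1)}(2;p-1) \;-\; S \pmod{[p]_q},$$
so that $2S \equiv p\,H_q^{(1)}(2;p-1)\pmod{[p]_q}$. The right-hand side is precisely the $n=2$ case of Theorem~\ref{thm:DilcherThm1}, which gives $H_q^{(1)}(2;p-1) \equiv K_2(p)(1-q)^2 \pmod{[p]_q}$.

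A direct evaluation from \eqref{equ:Cnp}, using $b_2 = -1/6$, $B_2 = 1/6$, and $s(1,1)=1$, yields $K_2(p) = (1-p^2)/12$. Substituting this into the congruence for $2S$ and dividing by $2$ (legitimate in $\Q[q]/([p]_q)$) produces the claimed formula $S \equiv -p(p-1)(p+1)(1-q)^2/24 \pmod{[p]_q}$. The only real subtlety is the sign bookkeeping in the symmetry step together with the implicit use of the invertibility of $q$ modulo $[p]_q$; the small primes $p=2,3$, for which Theorem~\ref{thm:DilcherThm1} is not formally stated, admit a one-line direct check.
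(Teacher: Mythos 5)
Your proof is correct, but it takes a genuinely different route from the paper's. You apply the reflection $k\mapsto p-k$ directly to the sum $S=\sum_{k=1}^{p-1}kq^k/[k]_q^2$, using the self-duality $q^{p-k}/[p-k]_q^2\equiv q^k/[k]_q^2 \pmod{[p]_q}$ (the same symmetry the paper exploits in its Sections~3 and~4), so that the weight $k$ turns into $p-k$ and yields $2S\equiv p\,H_q^{(1)}(2;p-1)$ at once; the depth-one value $K_2(p)=(1-p^2)/12$ from Theorem~\ref{thm:DilcherThm1} then finishes the computation, and your arithmetic checks out (it agrees with what one gets by combining \eqref{equ:SP1} and \eqref{equ:SP2}, since $q^j/[j]_q^2=1/[j]_q^2-(1-q)/[j]_q$). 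The paper instead starts from the depth-two sum $\sHq(2,1)$, applies the reflection there, and extracts $S$ as one of several terms in the resulting expansion; it then needs the stuffle relations for $\sHq(2,1)+\sHq(1,2)$ and $2\sHq(1,1)$ together with the values of $\sHq(1)$, $\sHq(2)$, $\sHq(3)$ to isolate $S$. Your argument is shorter and uses less input (only the $n=2$ case of Dilcher's theorem rather than three separate congruences and two stuffle identities), at the cost of not exhibiting the connection to depth-two $q$-MHS. Both approaches rely on the same implicit conventions (invertibility of $[k]_q$ and of $2$ modulo the irreducible polynomial $[p]_q$, plus Gauss's lemma to return from $\Q[q]$ to $\Z[q]$), and both must check $p=2,3$ by hand since Theorem~\ref{thm:DilcherThm1} requires $p>3$; you flag this correctly.
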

\begin{proof} We can check the congruence for $p=2$ and $p=3$  easily by hand. Now we assume $p\ge 5$.
By definition we have
$$\sHq(2,1)=\sum_{1\le i<j<p} \frac{1}{(1-q^i)^2(1-q^j)}.$$
With substitution $i\to p-i$ and $j\to p-j$ we get modulo $[p]_q$
\begin{align*}
-\sHq(2,1)=&-\sum_{1\le j<i<p} \frac{q^{2i}\cdot q^j}{(q^i-q^p)^2(q^j-q^p)}  \\
\equiv &-\sum_{1\le j<i<p} \frac{q^{2i}\cdot q^j}{(q^i-1)^2(q^j-1)}   \\
\equiv &-\sum_{1\le j<i<p} \frac{(q^i-1)^2+2(q^i-1)+1}{(q^i-1)^2} \cdot \frac{1-q^j-1}{1-q^j}  \\
\equiv & \sHq(1,2)-2\sHq(1,1)+\sum_{k=1}^{p-1}  \frac{p-3+k}{1-q^k}
 -\sum_{k=1}^{p-1}  \frac{k-1}{(1-q^k)^2}-{p-1\choose 2} \\
\equiv & \sHq(1,2)-2\sHq(1,1)+ (p-3)\sHq(1)+ \sHq(2)-{p-1\choose 2} - \sum_{k=1}^{p-1} \frac{ k q^{k}}{(1-q^k)^2} .
\end{align*}
Notice that we have the stuffle relations
$$\sHq(2,1)+\sHq(1,2)=\sHq(1)\sHq(2)-\sHq(3),\quad 2\sHq(1,1)=\sHq(1)^2-\sHq(2).$$
Hence modulo $[p]_q$
$$ \sum_{k=1}^{p-1} \frac{ k q^{k}}{(1-q^k)^2} \equiv  (\sHq(1)+2)\sHq(2)-\sHq(3) -\sHq(1)^2 + (p-3)\sHq(1) -{p-1\choose 2}.$$
Notice that by \cite[Theorem 2]{D}
\begin{equation}\label{equ:H3}
\sHq(3)  \equiv  -\frac{(p-1)(p-3)}{8}    \pmod{[p]_q}.
\end{equation}
The proposition now follows from \eqref{equ:SP1} and \eqref{equ:SP2} immediately.
\end{proof}

\section{A congruence of Lehmer type}

Instead of the harmonic sums up to $(p-1)$-st term Lehmer also studied the following type of congruence (see
\cite{L}): for every odd prime $p$
$$\sum_{j=1}^{(p-1)/2} \frac 1j \equiv -2q_p(2)+q_p(2)^2 p\pmod{p^2},$$
where $q_p(2)=(2^{p-1}-1)/p$ is the Fermat quotient. It is also easy to see that for every
positive integer $n$ and prime $p>2n+1$
$$\sum_{j=1}^{(p-1)/2} \frac 1{j^{2n}} \equiv 0 \pmod{p}.$$
As a $q$-analog of the above we have
\begin{thm}
Let $n$ be a positive integer. For every odd prime $p$ we have
$$ H_q^{(n)}(2n;(p-1)/2)
 \equiv \frac12 (1-q)^{2n} \sum_{j=0}^{n-1} {n-1\choose j}(-1)^j K_{2n-j}(p)   \quad \pmod{[p]_q}.$$
\end{thm}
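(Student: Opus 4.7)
The plan is to reduce the statement to Theorem \ref{thm:DilcherThm2} by showing that the full sum $\sum_{j=1}^{p-1} q^{nj}/[j]_q^{2n}$ is, modulo $[p]_q$, exactly twice the half-sum $H_q^{(n)}(2n;(p-1)/2)$. Once this doubling is in hand, the theorem is immediate from \eqref{equ:genmod} applied with $t=n$ and weight $2n$ (which is legal since $2n>n$), after dividing by $2$ --- legitimate because $p$ is odd and so $2$ is invertible modulo $[p]_q$ in the relevant coefficient ring.

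The key tool is the symmetry $k\leftrightarrow p-k$. Since $[p]_q$ divides $1-q^p$, one has $q^p\equiv 1\pmod{[p]_q}$, and therefore both
$$q^{n(p-k)}\equiv q^{-nk} \quad\text{and}\quad 1-q^{p-k}\equiv -q^{-k}(1-q^k)\pmod{[p]_q}.$$
Dividing the latter by $1-q$ gives $[p-k]_q\equiv -q^{-k}[k]_q$, and since the exponent $2n$ is \emph{even}, raising to the $2n$-th power absorbs the sign to yield $[p-k]_q^{2n}\equiv q^{-2nk}[k]_q^{2n}$. Combining the two congruences,
$$\frac{q^{n(p-k)}}{[p-k]_q^{2n}}\equiv \frac{q^{-nk}}{q^{-2nk}[k]_q^{2n}}=\frac{q^{nk}}{[k]_q^{2n}}\pmod{[p]_q},$$
so the $k$-th and $(p-k)$-th summands of the full sum are congruent.

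Splitting $\{1,\dots,p-1\}$ into its lower half $\{1,\dots,(p-1)/2\}$ and its upper half $\{(p+1)/2,\dots,p-1\}$ and re-indexing the upper half via $k=p-j$ then produces $\sum_{j=1}^{p-1} q^{nj}/[j]_q^{2n}\equiv 2\,H_q^{(n)}(2n;(p-1)/2)\pmod{[p]_q}$, and the conclusion follows. The argument presents no serious obstacle; the only subtlety worth flagging is that the evenness of the denominator exponent $2n$ is what causes the sign in $[p-k]_q\equiv -q^{-k}[k]_q$ to disappear, so the result is genuinely one about ``even power'' Lehmer-type sums, and a parallel identity for odd powers would instead produce a cancellation rather than a doubling.
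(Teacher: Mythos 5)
Your proposal is correct and follows essentially the same route as the paper: split the full sum $\sum_{j=1}^{p-1} q^{nj}/[j]_q^{2n}$ at $(p-1)/2$, use the substitution $k\mapsto p-k$ together with $q^p\equiv 1\pmod{[p]_q}$ (the evenness of the exponent $2n$ absorbing the sign) to show the full sum is twice the half sum, and then invoke \eqref{equ:genmod} with $t=n$. The paper's proof is just a terser version of the same computation.
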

\begin{proof}
By definition and substitution $i\to p-i$ we have
\begin{align*}
 \sHq^{(n)}(2n)=& \sHq^{(n)}(2n;(p-1)/2) +\sum_{1\le i\le (p-1)/2}  \frac{q^{n(p-i)}}{(1-q^{p-i})^{2n}} \\
\equiv  & 2 \sHq^{(n)}(2n;(p-1)/2) \pmod{[p]_q}\\
\end{align*}
By \eqref{equ:genmod} this yields the theorem quickly.
\end{proof}

To conclude the paper we remark that the congruence for general $q$-MHS should involve some type of $q$-analog of Bernoulli numbers and Euler numbers similar to the classical cases treated in \cite{wols}. We hope to return to this theme in the future.

\medskip

\noindent{\bf Acknowledgement.} This work is partly supported by NSF grant DMS1162116.

\end{document}